\documentclass[a4paper,12pt]{amsart}

\usepackage[margin=1in]{geometry}
\usepackage{amsmath,amssymb,amsthm,mathtools}
\usepackage{hyperref}

\newtheorem{theorem}{Theorem}[section]
\newtheorem{lemma}[theorem]{Lemma}
\newtheorem{proposition}[theorem]{Proposition}

\newtheorem{problem}[theorem]{Problem}

\theoremstyle{definition}
\newtheorem{definition}[theorem]{Definition}

\theoremstyle{remark}

\newcommand{\bbR}{\mathbb R}

\newcommand{\bbZ}{\mathbb Z}

\DeclareMathOperator{\RealPart}{Re}
\renewcommand{\Re}{\RealPart}
\DeclareMathOperator{\ImagPart}{Im}
\renewcommand{\Im}{\ImagPart}

\title[Positive-Definiteness and Bernstein Operators]{Preservation of Positive-Definiteness by Bernstein Operators on the Circle}

\author{Matthew Otten}
\address{Department of Physics, University of Wisconsin--Madison, Madison, WI, USA}
\email{mjotten@wisc.edu}

\author{Nam Nguyen}
\address{Department of Physics and Astronomy, George Mason University, Fairfax, VA, 22030 USA}
\address{Quantum Science and Engineering Center, George Mason University, Fairfax, VA, 22030 USA}
\email{nguyen314159265@gmail.com}

\author{Thomas W. Watts}
\address{Centre for Quantum Software and Information,
School of Computer Science, Faculty of Engineering \& Information Technology,
University of Technology Sydney, NSW 2007, Australia}
\email{thomas.watts@student.uts.edu.au}

\date{\today}

\begin{document}

\begin{abstract} 
We prove that, for every $n\ge1$, the degree-$n$ Bernstein operator on $[0,\pi]$ preserves positive-definiteness on the circle $S^1$. Equivalently, if a continuous function on $[0,\pi]$ defines a positive-definite isotropic kernel on $S^1$, then its Bernstein polynomial approximation of any fixed degree does as well. The proof reduces the problem to the nonnegativity of the cosine coefficients of the Bernstein images $ Q_{n,m}=B_n[\cos(mx)]$, which we prove using an explicit coefficient formula and a two-regime positivity argument. We also discuss the higher-dimensional sphere analogue and show that the naive affine Bernstein operator fails to preserve the positive-definite cone already on $S^2$.
\end{abstract} 

\maketitle

\section{Introduction and Main Theorem}
Positive-definite isotropic kernels on the circle are kernels of the form $K(z,w)=f(d_{S^1}(z,w))$, where $d_{S^1}$ denotes geodesic distance. We use the distance variable $x\in[0,\pi]$. In this parametrization, Bochner-Schoenberg theory characterizes positive-definiteness by nonnegative, summable cosine coefficients~\cite{Schoenberg1942PositiveDF,Bingham1973PositiveDF,Katznelson2004Harmonic}.

This positivity condition is also the structural validity condition for covariance and Gram kernels; see, for example, Gneiting~\cite{Gneiting2013Spheres} for the spherical covariance viewpoint. If $K$ is a covariance kernel then every finite sampled matrix $(K(z_p,z_q))_{p,q}$ must be positive semidefinite. Thus, for kernel or covariance models on the circle, an approximation procedure should preserve positive-definiteness, not only pointwise or uniform accuracy. The result below shows that the classical Bernstein approximants have this cone-preservation property on $S^1$ at every fixed degree.

\begin{definition}[Positive-definite isotropic kernels on $S^1$] 
\label{def:pd-circle}
Let $d_{S^1}$ denote geodesic distance on the unit circle, normalized so that $d_{S^1}(z,w)\in[0,\pi]$. A continuous function $f: [0,\pi] \to\mathbb R$ is called positive-definite on $S^1$ if, for every finite set of points $z_1,\ldots,z_N \in S^1$, the matrix 
\[ \bigl(f(d_{S^1}(z_p,z_q))\bigr)_{p,q=1}^N \] 
is positive semidefinite. Equivalently, by the Bochner-Schoenberg characterization, $f$ has a uniformly convergent cosine expansion 
\[ 
f(x) = 
\frac{b_0}{2} + \sum_{m=1}^{\infty} b_m\cos(mx), \qquad b_m\ge0 \ (m \geq 0), \qquad \sum_{m=0}^{\infty} b_m<\infty. 
\] 
Here 
\[ 
b_m=\frac{2}{\pi}\int_0^\pi f(x)\cos(mx) dx,\qquad m \ge 0. 
\] 
Throughout the paper, positive-definite is used in the positive-semidefinite kernel sense, not in the strictly positive-definite sense. 
\end{definition}

For an integer $n\ge 1$, define the Bernstein operator
\begin{equation}
\label{eq:Bndef}
B_n[f](x)=\sum_{j=0}^{n}\binom{n}{j}
\Bigl(\frac{x}{\pi}\Bigr)^j\Bigl(1-\frac{x}{\pi}\Bigr)^{n-j}
f\Bigl(\frac{j\pi}{n}\Bigr), \qquad 0 \le x\le \pi.
\end{equation}
The operator \eqref{eq:Bndef} is the classical Bernstein operator \cite{Lorentz1953Bernstein}. Since $B_n[f]$ is a convex combination of the sampled values $\{f(j\pi/n)\}_{j=0}^n$, it is natural to ask whether this operator also preserves the Bochner--Schoenberg positivity encoded in Definition~\ref{def:pd-circle}.

\begin{theorem}[Main theorem]\label{thm:main}
For every integer $n\ge 1$, $B_n$ preserves positive-definiteness on $S^1$.
\end{theorem}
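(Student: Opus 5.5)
The plan is to reduce the theorem to a statement about the single functions $Q_{n,m}=B_n[\cos(mx)]$, and then prove that each of these has nonnegative cosine coefficients.

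\emph{Step 1: reduction.} Let $f$ be positive-definite on $S^1$. By Definition~\ref{def:pd-circle}, $f=\frac{b_0}{2}+\sum_{m\ge1}b_m\cos(mx)$ with $b_m\ge0$ and $\sum b_m<\infty$, and the series converges uniformly. The operator $B_n$ is linear and uses only the $n+1$ point evaluations $f(j\pi/n)$, so $\|B_n g\|_\infty\le\|g\|_\infty$. Hence
\[
B_n[f]=\frac{b_0}{2}+\sum_{m\ge1}b_m Q_{n,m},
\]
with uniform convergence on $[0,\pi]$; here $B_n[1]=1$ was used. Integrating against $\cos(kx)$ term by term gives
\[
c_k(B_n f)=b_0\delta_{k0}+\sum_{m\ge1}b_m\,c_k(Q_{n,m}),
\]
where $c_k(g)=\frac{2}{\pi}\int_0^\pi g(x)\cos(kx)\,dx$.

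Summability is automatic. $B_n[f]$ is a polynomial on $[0,\pi]$, so its even $2\pi$-periodic extension is Lipschitz and piecewise $C^2$. Two integrations by parts then give $c_k=O(k^{-2})$, the cosine series converges absolutely, and it represents $B_n[f]$. Therefore it suffices to prove
\[
c_k(Q_{n,m})\ge0\qquad\text{for all } n\ge1,\ m\ge1,\ k\ge0.
\]

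\emph{Step 2: explicit coefficient formula.} Put $t=x/\pi$ and $\omega=e^{im\pi/n}$. By the binomial theorem,
\[
Q_{n,m}(x)=\Re\,p(t),\qquad p(t)=\bigl(1+t(\omega-1)\bigr)^n .
\]
Consequently $c_k(Q_{n,m})=2\,\Re\int_0^1 p(t)\cos(k\pi t)\,dt$. The case $k=0$ is a direct integral:
\[
c_0=2\,\Re\,\frac{\omega^{n+1}-1}{(n+1)(\omega-1)} .
\]
For $k\ge1$, integrate by parts repeatedly. The derivatives are explicit:
\[
p^{(r)}(t)=\frac{n!}{(n-r)!}(\omega-1)^r\bigl(1+t(\omega-1)\bigr)^{n-r},
\]
so $p^{(r)}(0)=\frac{n!}{(n-r)!}(\omega-1)^r$ and $p^{(r)}(1)=\frac{n!}{(n-r)!}(\omega-1)^r\omega^{n-r}$, with $\omega^n=(-1)^m$. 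Only the odd-order boundary terms survive against $\cos(k\pi t)$. This yields a finite sum
\[
c_k(Q_{n,m})=\sum_{r\ \mathrm{odd}}\frac{\pm 2\,n!}{(n-r)!\,(k\pi)^{r+1}}\,\Re\Bigl[(\omega-1)^r\bigl((-1)^{k+m}\omega^{-r}-1\bigr)\Bigr].
\]
Using $\omega-1=2i\sin\bigl(\tfrac{m\pi}{2n}\bigr)e^{im\pi/(2n)}$, each term becomes a real trigonometric expression in $\theta=m\pi/(2n)$. One sees that all terms vanish unless $k+m$ has the appropriate parity, and that the leading term $r=1$ is nonnegative.

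\emph{Step 3: two-regime positivity, which is the main obstacle.} In the regime $k\pi\gtrsim n|\omega-1|=2n\sin\theta$, the terms decay geometrically with ratio about $(n|\omega-1|/k\pi)^2$. There I would show that the $r=1$ term dominates the tail by a geometric-series bound, handling the factor $\bigl((-1)^{k+m}\omega^{-r}-1\bigr)$ with $|\cdot|\le2$.

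In the complementary regime of small $k$, the expansion is useless. There I would instead write $\int_0^1 p(t)\cos(k\pi t)\,dt$ as a contour or Beta-type integral. Concretely, I would expand $\cos(k\pi t)$ in the Bernstein basis, or equivalently use $\int_0^1 t^j(1-t)^{n-j}e^{ik\pi t}\,dt$, which is a confluent hypergeometric value. The goal is to exhibit $c_k$ as a positive combination of terms like $\sin^2$ or as $|\cdot|^2$-type quantities. A fallback is to compare $Q_{n,m}$ with the Fejér-like kernel $\Re(1+t(\omega-1))^n$, whose modulus is $\bigl|\cos(\theta)\bigr|$-controlled, and to verify the finitely many boundary cases (small $n$, or $m$ close to $n$) directly.

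I expect the overlap between the two regimes, where $k\approx 2n\sin\theta/\pi$, to be the delicate point. This is where I would first try to sharpen the tail estimate in Step 3 before resorting to the second method.
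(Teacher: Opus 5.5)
Your Steps 1 and 2 are fine and agree with the paper. This covers the basis reduction, the $O(k^{-2})$ summability, and the endpoint computation. For $k+m$ even it gives $c_k(Q_{n,m})=\sum_{j=1}^{\lceil n/2\rceil}\gamma_j\sin((2j-1)\theta)$ with $\gamma_j=\frac{4\,n!\,(2\sin\theta)^{2j-1}}{(n-2j+1)!\,(k\pi)^{2j}}>0$. You should first reduce $m$ modulo $2n$ to $0\le m\le n$; otherwise $\omega=1$ can occur and your $k=0$ formula is undefined. But Step 3 is the whole difficulty, and it is not yet a proof. The one concrete mechanism you propose fails exactly where it is needed: the $r=1$ term cannot dominate the tail near $k\approx m$. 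For $k=m=1$ and large $n$, $2\sin\theta\approx\pi/n$ gives $\gamma_j\approx 4/\pi$ for $j\ll\sqrt n$. So the summands $\gamma_j\sin((2j-1)\theta)\approx 2(2j-1)/n$ initially \emph{increase}. Bounding the trigonometric factor by $|\cdot|\le 2$ is worse still: it loses a factor $\sin\theta$ against the leading term $8n\sin^2\theta/(k\pi)^2$, so it only closes for $k\gtrsim\sqrt{nm}$. What does hold on all of $k\ge m$ is monotonicity: $2\sin\theta\le m\pi/n\le k\pi/n$ gives $\gamma_{j+1}/\gamma_j\le (n-1)(n-2)/n^2<1$. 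The missing idea is to combine this with Abel summation against the nonnegative partial sums $\sum_{r=1}^{j}\sin((2r-1)\theta)=\sin^2(j\theta)/\sin\theta$, instead of trying to make the first term dominate.

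For $m>k$ you list options (Bernstein expansion of the cosine, confluent hypergeometric values, sums of squares, a Fej\'er comparison), but none is carried to an inequality. Your fallback of checking ``finitely many boundary cases'' is unavailable, because $n$ is arbitrary and cases such as $m$ near $n$ occur for every $n$. The paper's key step here is a resummation. Set $z=(\omega-1)/(ik\pi)$ and $w=1/z=a-ib$, where $b=k\pi/2$ and $a=b\cot\theta$. Then $\sum_j z^{2j-1}/(n-2j+1)!$ equals $z^n$ times the degree-$(n-1)$ Taylor polynomial at $0$ of $\sinh$ or $\cosh$ (according to the parity of $n$), evaluated at $w$. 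The full value $z^n\sinh w$ or $z^n\cosh w$ is real by the parity of $k+m$, so only the Taylor remainder survives. This gives $c_k(Q_{n,m})=\frac{4n}{k\pi}\int_0^1(1-u)^{n-1}\cosh(au)\sin(bu)\,du$. When $m>k$ one has $a\le kn/m\le n-1$, so the weight is nonincreasing. Since $b$ is a multiple of $\pi/2$, grouping the integral into alternating half-period blocks gives nonnegativity. With these two tools the regimes meet exactly at $k=m$, so the ``delicate overlap'' you anticipate never arises.
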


This result should be distinguished from known coefficient-preserving transforms and dimension-walk operators for spherical positive-definite functions \cite{BeatsonZuCastell2016,BeatsonZuCastell2017,Xu2017Jacobi,MassaPeronPorcu2017}. Those works are tailored to Gegenbauer/Jacobi structures on spheres. The point here is more specific: in the one-dimensional spherical case $S^1$, the classical Bernstein operator itself preserves the nonnegative cosine-coefficient cone.

We derive the required coefficient formula in the next section and prove Theorem~\ref{thm:main} in Section~\ref{sec:full-proof-circle}. Theorem~\ref{thm:main} provides a concrete positivity-preserving approximation mechanism for kernel or covariance-type functions.

\section{Reduction to Explicit Coefficient Inequalities}
We first reduce Theorem~\ref{thm:main} to coefficient inequalities for the
Bernstein images of the cosine basis.

\subsection{Basis reduction}
For later use we record the basic boundedness of the Bernstein operator: for every bounded function $h$ on $[0,\pi]$,
\[
\|B_n[h]\|_\infty\le \|h\|_\infty,
\]
because $B_n[h]$ is a convex combination of sampled values of $h$. Hence uniformly convergent cosine series remain uniformly convergent after applying $B_n$, and termwise integration against $\cos(kx)$ is justified.

Let $f(x)=\frac{b_0}{2} + \sum_{m\ge1} b_m\cos(mx)$ with $b_m\ge0$ for $m \geq 0$ and $\sum_{m = 0}^\infty b_m<\infty$. Uniform convergence and linearity give
\[
  B_n[f](x)=\frac{b_0}{2}+\sum_{m\ge1}b_m\,B_n[\cos(mx)](x).
\]
Hence it is enough to study $B_n[\cos(mx)]$ for each $m$. Thus the problem becomes a cone-preservation statement for the cosine basis: if each basis image has nonnegative cosine coefficients, then any nonnegative summable combination of them does as well.

Set $t=x/\pi\in[0,1]$, define
\[
Q_{n,m}(t):=B_n[\cos(mx)](\pi t),\qquad \zeta:=e^{im\pi/n}.
\]
Using $\cos(m\pi j/n)=\Re(\zeta^j)$ and the binomial theorem,
\[
\begin{aligned}
Q_{n,m}(t)
&=\sum_{j=0}^{n}\binom{n}{j}t^j(1-t)^{n-j}
\cos\!\left(\frac{m\pi j}{n}\right) \\
&=\Re\sum_{j=0}^{n}\binom{n}{j}t^j(1-t)^{n-j}\zeta^j \\
&=\Re[(1-t+t\zeta)^n].
\end{aligned}
\]
Thus
\begin{equation}
\label{eq:Qclosed}
Q_{n,m}(t)=\Re[(1-t+t\zeta)^n].
\end{equation}

For $g$ on $[0,\pi]$, define cosine coefficients
\[
\widehat g_k:=\frac{2}{\pi}\int_0^\pi g(x)\cos(kx)\,dx,\qquad k\ge0.
\]
With this normalization, the cosine expansion is written as
\[
g(x) \sim \frac{\widehat g_0}{2} + \sum_{k\ge1}\widehat g_k\cos(kx).
\]
Thus, for the functions considered below, nonnegativity of all $\widehat g_k$ is exactly nonnegativity of the cosine-series coefficients.

For functions on $[0,1]$, we use the corresponding rescaled cosine coefficient notation 
\[ 
\widehat P_k := 2\int_0^1 P(t)\cos(k\pi t) dt,  \qquad k\ge 0. 
\] 
Equivalently, $\widehat P_k$ is the $k$-th cosine coefficient on $[0,\pi]$ of the function $x\mapsto P(x/\pi)$. In particular, 
\[
\widehat Q_{n,m}(k) = 2\int_0^1 Q_{n,m}(t)\cos(k\pi t) dt, \qquad k\ge 0. 
\]
Note that the rescaling from $x\in[0,\pi]$ to $t\in[0,1]$ is only notational. It removes repeated factors of $\pi$ and does not change the cosine-coefficient positivity question.

\subsection{Endpoint formula}

We first record an endpoint formula for the rescaled cosine coefficients of a polynomial on $[0,1]$. This expresses $\widehat P_k$, and hence later $\widehat Q_{n,m}(k)$, in terms of odd derivatives at the endpoints.

\begin{lemma}\label{lem:endpoint}
If $P$ is a polynomial on $[0,1]$ with $\deg P\le n$, then for every $k\ge1$,
\begin{equation}
\label{eq:endpoint}
\widehat P_k
=2\sum_{j=1}^{\lceil n/2\rceil}(-1)^{j-1}
\frac{(-1)^kP^{(2j-1)}(1)-P^{(2j-1)}(0)}{(k\pi)^{2j}}.
\end{equation}
\end{lemma}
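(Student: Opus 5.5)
We prove the formula by repeated integration by parts, each step trading one derivative of $P$ for a factor $1/(k\pi)$, until $P^{(n+1)}\equiv0$ ends the process. Fix $k\ge1$ and write $\omega=k\pi$. The boundary values we need are $\sin(\omega t)=0$ at $t=0,1$, $\cos(0)=1$ and $\cos(\omega)=(-1)^k$.

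First integration by parts:
\[
\int_0^1 P(t)\cos(\omega t)\,dt=\Bigl[\tfrac{P(t)\sin(\omega t)}{\omega}\Bigr]_0^1-\frac1\omega\int_0^1 P'(t)\sin(\omega t)\,dt=-\frac1\omega\int_0^1 P'(t)\sin(\omega t)\,dt.
\]
The boundary term vanishes because of the sine.

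Second integration by parts:
\[
\int_0^1 P'(t)\sin(\omega t)\,dt=\Bigl[-\tfrac{P'(t)\cos(\omega t)}{\omega}\Bigr]_0^1+\frac1\omega\int_0^1 P''(t)\cos(\omega t)\,dt=-\frac{(-1)^kP'(1)-P'(0)}{\omega}+\frac1\omega\int_0^1P''(t)\cos(\omega t)\,dt.
\]

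Combining the two, set $I_r:=\int_0^1 P^{(r)}(t)\cos(\omega t)\,dt$. We obtain the recursion
\[
I_{r}=\frac{(-1)^kP^{(r+1)}(1)-P^{(r+1)}(0)}{\omega^2}-\frac{1}{\omega^2}I_{r+2}.
\]

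Starting from $I_0$ and iterating $j$ times gives
\[
I_0=\sum_{j=1}^{J}(-1)^{j-1}\frac{(-1)^kP^{(2j-1)}(1)-P^{(2j-1)}(0)}{\omega^{2j}}+\frac{(-1)^J}{\omega^{2J}}I_{2J}.
\]
This follows by a straightforward induction on $J$. The sign $(-1)^{j-1}$ comes from the factor $-1/\omega^2$ in front of $I_{r+2}$, applied $j-1$ times.

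Take $J=\lceil n/2\rceil$. Then $2J\ge n$.
\begin{itemize}
\item If $2J>n$, then $P^{(2J)}\equiv0$ and the remainder $I_{2J}$ vanishes.
\item If $2J=n$ (so $n$ is even), then $P^{(n)}$ is a constant $c$, and $I_n=c\int_0^1\cos(k\pi t)\,dt=c\,\sin(k\pi)/(k\pi)=0$ since $k\ge1$.
\end{itemize}
In either case the remainder drops out. The terms that would appear with $2j-1>n$ vanish automatically, so the upper limit $\lceil n/2\rceil$ is harmless.

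Multiplying $I_0$ by $2$ gives exactly \eqref{eq:endpoint}.

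The only delicate points are the boundary terms and the termination. The boundary terms work because sines vanish at both endpoints while cosines give $(-1)^k$ and $1$. Termination relies on the even-degree case, where the last remainder is killed by orthogonality of a constant to $\cos(k\pi t)$ with $k\ge1$. The rest is bookkeeping of signs in the induction.
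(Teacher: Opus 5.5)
Your proof is correct and follows essentially the same route as the paper: two integrations by parts give the recursion for $\int_0^1 P^{(r)}(t)\cos(k\pi t)\,dt$, which is iterated $\lceil n/2\rceil$ times. The remainder then vanishes, either because $P^{(2J)}\equiv 0$ or, in the even-degree case, because a constant integrates to zero against $\cos(k\pi t)$. Your explicit induction on $J$ and the case split at termination simply make precise what the paper summarizes as ``repeating this argument.''
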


\begin{proof}
Put $\lambda=k\pi$. Since $\sin\lambda=0$ and $\cos\lambda=(-1)^k$, integration by parts gives
\[ 
\int_0^1 P(t)\cos(\lambda t) dt 
= -\frac1\lambda\int_0^1 P'(t)\sin(\lambda t) dt. 
\]
A second integration by parts gives
\[ 
\int_0^1 P'(t) \sin(\lambda t) dt = \frac{P'(0)-(-1)^kP'(1)}{\lambda} + \frac1\lambda\int_0^1 P''(t)\cos(\lambda t) dt. 
\]
Hence 
\[ 
\int_0^1 P(t)\cos(\lambda t) dt = \frac{(-1)^kP'(1)-P'(0)}{\lambda^2} -\frac1{\lambda^2}\int_0^1 P''(t)\cos(\lambda t) dt. 
\]
Repeating this argument yields
\[ 
\int_0^1 P(t)\cos(\lambda t) dt = \sum_{j=1}^{\lceil n/2\rceil}(-1)^{j-1} \frac{(-1)^kP^{(2j-1)}(1)-P^{(2j-1)}(0)}{\lambda^{2j}}. 
\]
The remaining integral vanishes after sufficiently many integrations because $P$ is a polynomial; in the final even-degree case it is a constant multiple of $\int_0^1 \cos(\lambda t) dt = 0$. Multiplying by $2$ gives \eqref{eq:endpoint}.
\end{proof}

We next use the symmetry of $Q_{n,m}$ to relate the derivatives at $t=1$ to those at $t=0$. This will allow us to rewrite \eqref{eq:endpoint} entirely in terms of derivatives at one endpoint.

\begin{lemma}[Reflection symmetry]\label{lem:reflect}
For all $n\ge1$, integers $m$, and $r\ge0$,
\[
Q_{n,m}(1-t)=(-1)^mQ_{n,m}(t),
\qquad
Q_{n,m}^{(r)}(1)=(-1)^{m+r}Q_{n,m}^{(r)}(0).
\]
\end{lemma}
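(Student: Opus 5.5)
The plan is to work from the closed form \eqref{eq:Qclosed}, $Q_{n,m}(t)=\Re[(1-t+t\zeta)^n]$ with $\zeta=e^{im\pi/n}$, and to write $1-t+t\zeta$ in a form that makes the reflection $t\mapsto 1-t$ visible. Substituting $1-t$ for $t$ gives
\[
1-(1-t)+(1-t)\zeta = t+(1-t)\zeta=\zeta\bigl(1-t+t\bar\zeta\bigr),
\]
since $\zeta\bar\zeta=1$. Raising to the $n$-th power yields
\[
(t+(1-t)\zeta)^n=\zeta^n\,(1-t+t\bar\zeta)^n=(-1)^m\,\overline{(1-t+t\zeta)^n},
\]
because $\zeta^n=e^{im\pi}=(-1)^m$ is real and $t$ is real. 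Taking real parts, and using that conjugation does not change the real part, gives $Q_{n,m}(1-t)=(-1)^mQ_{n,m}(t)$. As a sanity check I would also verify this directly from the Bernstein sum: reindexing $j\mapsto n-j$ turns $\cos(m\pi j/n)$ into $\cos(m\pi-m\pi j/n)=(-1)^m\cos(m\pi j/n)$. Either route works, and the second needs no complex numbers.

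For the derivative statement, I would differentiate the identity $Q_{n,m}(1-t)=(-1)^mQ_{n,m}(t)$ $r$ times in $t$. By the chain rule the left side becomes $(-1)^rQ_{n,m}^{(r)}(1-t)$. Evaluating at $t=0$ gives $(-1)^rQ_{n,m}^{(r)}(1)=(-1)^mQ_{n,m}^{(r)}(0)$, and multiplying both sides by $(-1)^r$ gives the claimed formula.

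There is no real obstacle here. The only step needing care is keeping track of the sign from $\zeta^n$ and of the chain-rule sign $(-1)^r$. The result holds for all integers $m$, including negative ones, since $\zeta^n=(-1)^m$ in every case.
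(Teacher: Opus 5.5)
Your proof is correct, and your main route differs slightly from the paper's. You work from the closed form $Q_{n,m}(t)=\Re[(1-t+t\zeta)^n]$ and observe that, for real $t$, the substitution $t\mapsto 1-t$ turns $1-t+t\zeta$ into $\zeta\,\overline{(1-t+t\zeta)}$. The symmetry then follows from $\zeta^n=(-1)^m$ being real.

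The paper's proof is exactly your ``sanity check'': reindex $j\mapsto n-j$ in the Bernstein sum and use $\cos(m\pi-m\pi j/n)=(-1)^m\cos(m\pi j/n)$. That route is more elementary and needs no complex numbers. It also shows something more general: $B_n$ commutes with the reflection $x\mapsto \pi-x$, so the same parity holds for $B_n[f]$ whenever $f(\pi-x)=\pm f(x)$. Your route ties the symmetry to the complex-exponential structure that the paper later uses to evaluate derivatives at $0$.

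The derivative step is the same as the paper's: differentiate $r$ times, pick up the factor $(-1)^r$ from the chain rule, and set $t=0$.
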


\begin{proof}
Using the Bernstein form,
\[ 
Q_{n,m}(t) = \sum_{j=0}^n {n\choose j}t^j(1-t)^{n-j} \cos\Bigl(\frac{m\pi j}{n}\Bigr). 
\]
Therefore
\[ 
Q_{n,m}(1-t) = \sum_{j=0}^n {n\choose j}(1-t)^jt^{n-j} \cos\Bigl(\frac{m\pi j}{n}\Bigr). 
\]
Changing variables $r=n-j$ gives
\[ 
Q_{n,m}(1-t) = \sum_{r=0}^n {n\choose r}t^r(1-t)^{n-r} \cos\Bigl(\frac{m\pi(n-r)}{n}\Bigr). 
\]
Since 
\[ 
\cos\Bigl(m\pi-\frac{m\pi r}{n}\Bigr) = (-1)^m\cos\Bigl(\frac{m\pi r}{n}\Bigr), 
\]
we obtain
\[ 
Q_{n,m}(1-t)=(-1)^mQ_{n,m}(t). 
\]
Differentiating this identity $r$ times gives
\[ 
(-1)^r Q_{n,m}^{(r)}(1-t) = (-1)^mQ_{n,m}^{(r)}(t). 
\]
Setting $t=0$ gives 
\[ 
Q_{n,m}^{(r)}(1)=(-1)^{m+r} Q_{n,m}^{(r)}(0). 
\]
\end{proof}

\subsection{Explicit coefficient formula}

We now combine the endpoint formula with the reflection symmetry of $Q_{n,m}$ to obtain an explicit expression for the cosine coefficient $\widehat Q_{n,m}(k)$. The point is that Lemma \ref{lem:endpoint} expresses $\widehat Q_{n,m}(k)$ in terms of odd derivatives at $t=0$ and $t=1$, while Lemma~\ref{lem:reflect} reduces the derivatives at $t=1$ to derivatives at $t=0$. The closed form \eqref{eq:Qclosed} then lets us evaluate these derivatives explicitly.

\begin{proposition}\label{prop:explicit}
Fix $n\ge1$, $0\le m\le n$, and set $\theta:=m\pi/(2n)$.
For $k\ge1$:
\[
\widehat Q_{n,m}(k)=0\quad\text{if }k+m\text{ is odd}.
\]
If $k+m$ is even, then
\begin{equation}\label{eq:explicit}
\widehat Q_{n,m}(k)
=4n!\sum_{j=1}^{\lceil n/2\rceil}
\frac{(2\sin\theta)^{2j-1}\sin((2j-1)\theta)}
{(n-2j+1)!(k\pi)^{2j}}.
\end{equation}
\end{proposition}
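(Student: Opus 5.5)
Apply Lemma~\ref{lem:endpoint} to $P=Q_{n,m}$, which has degree at most $n$, and rewrite the endpoint values at $t=1$ with Lemma~\ref{lem:reflect}. For odd order $r=2j-1$ that lemma gives $Q_{n,m}^{(2j-1)}(1)=(-1)^{m+1}Q_{n,m}^{(2j-1)}(0)$. Each numerator in \eqref{eq:endpoint} therefore becomes
\[
(-1)^kQ^{(2j-1)}_{n,m}(1)-Q^{(2j-1)}_{n,m}(0)=-\bigl(1+(-1)^{k+m}\bigr)Q^{(2j-1)}_{n,m}(0).
\]
If $k+m$ is odd, every numerator vanishes, so $\widehat Q_{n,m}(k)=0$. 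If $k+m$ is even, each numerator equals $-2Q_{n,m}^{(2j-1)}(0)$, and
\[
\widehat Q_{n,m}(k)=-4\sum_{j=1}^{\lceil n/2\rceil}(-1)^{j-1}\frac{Q_{n,m}^{(2j-1)}(0)}{(k\pi)^{2j}}.
\]

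Next I compute the derivatives at $0$ from the closed form \eqref{eq:Qclosed}. Write $(1-t+t\zeta)^n=(1+t(\zeta-1))^n$. Differentiating $r$ times and setting $t=0$ gives
\[
Q_{n,m}^{(r)}(0)=\frac{n!}{(n-r)!}\Re\bigl[(\zeta-1)^r\bigr]
\]
for $r\le n$, and zero for $r>n$. The sum in \eqref{eq:explicit} runs up to $\lceil n/2\rceil$, so $2j-1\le n$ always holds and the factorial $(n-2j+1)!$ is defined.

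The key simplification uses $\theta=m\pi/(2n)$, so that $\zeta=e^{2i\theta}$. Then
\[
\zeta-1=e^{i\theta}(e^{i\theta}-e^{-i\theta})=2i\sin\theta\, e^{i\theta},
\qquad\text{so}\qquad
(\zeta-1)^{2j-1}=(2\sin\theta)^{2j-1}\,i^{2j-1}e^{i(2j-1)\theta}.
\]
Since $i^{2j-1}=(-1)^{j-1}i$, we get $\Re[i\,e^{i(2j-1)\theta}]=-\sin((2j-1)\theta)$, and hence
\[
\Re\bigl[(\zeta-1)^{2j-1}\bigr]=(-1)^{j}(2\sin\theta)^{2j-1}\sin((2j-1)\theta).
\]
Substituting this into the displayed sum, the signs combine as $-(-1)^{j-1}(-1)^j=+1$. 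This yields exactly \eqref{eq:explicit}, with the prefactor $4n!/(n-2j+1)!$.

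The only real obstacle is sign bookkeeping. There are three sources of sign to track: the $(-1)^{j-1}$ from Lemma~\ref{lem:endpoint}, the reflection sign $(-1)^{m+r}$ with $r$ odd, and the phase $i^{2j-1}$. I would sanity-check the result on $n=1$, $m=1$. There $Q_{1,1}(t)=1-2t$, so $\widehat Q(k)=8/(k\pi)^2$ for odd $k$. The formula gives $4\cdot(2\cdot 1)\cdot 1/(k\pi)^2$, since $\theta=\pi/2$, and the two agree.
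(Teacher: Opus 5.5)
Your proposal is correct and follows the paper's proof essentially step for step. You combine the endpoint formula with the reflection identity to get the parity factor $-(1+(-1)^{k+m})$, use $Q_{n,m}^{(r)}(0)=\frac{n!}{(n-r)!}\Re[(\zeta-1)^r]$ together with $\zeta-1=2i e^{i\theta}\sin\theta$, and your sign bookkeeping and $n=m=1$ check both come out right.
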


\begin{proof}
Apply Lemma~\ref{lem:endpoint} to $P=Q_{n,m}$. For $k\ge1$,
\[
\widehat Q_{n,m}(k)
=
2 \sum_{j=1}^{\lceil n/2 \rceil} (-1)^{j-1}
\frac{ 
(-1)^k Q_{n,m}^{(2j-1)}(1) - Q_{n,m}^{(2j-1)}(0) 
}{ 
(k\pi)^{2j} 
}.
\]
By Lemma~\ref{lem:reflect},
\[
Q_{n,m}^{(2j-1)}(1)
=
(-1)^{m+2j-1} Q_{n,m}^{(2j-1)}(0)
=
(-1)^{m+1} Q_{n,m}^{(2j-1)}(0),
\]
so
\[
(-1)^k Q_{n,m}^{(2j-1)}(1) - Q_{n,m}^{(2j-1)}(0)
=
\bigl( (-1)^{k+m+1}-1 \bigr) Q_{n,m}^{(2j-1)}(0).
\]
Therefore
\[
\widehat Q_{n,m}(k)
=
2 \sum_{j=1}^{ \lceil n/2\rceil }(-1)^{j-1}
\frac{ 
\bigl((-1)^{k+m+1}-1\bigr) Q_{n,m}^{(2j-1)}(0)
}{
(k\pi)^{2j}
}.
\]

If $k+m$ is odd, then $(-1)^{k+m+1}-1=0$, hence
\[
\widehat Q_{n,m}(k) = 0.
\]

Assume now that $k+m$ is even. Then $(-1)^{k+m+1}-1=-2$, and so
\[
\widehat Q_{n,m}(k)
=
-4 \sum_{j=1}^{\lceil n/2\rceil}(-1)^{j-1}
\frac{ Q_{n,m}^{(2j-1)}(0) }{ (k\pi)^{2j} }.
\]
From \eqref{eq:Qclosed},
\[
Q_{n,m}^{(r)}(0)
=
\frac{n!}{(n-r)!}\Re[(\zeta-1)^r],
\qquad
\zeta = e^{im\pi/n}.
\]
For $r = 2j-1$ this gives
\[
Q_{n,m}^{(2j-1)}(0)
=
\frac{n!}{(n-2j+1)!}\Re[(\zeta-1)^{2j-1}].
\]
Now write
\[
\zeta=e^{2i\theta},
\qquad
\theta:=\frac{m\pi}{2n}.
\]
Then
\[
\zeta-1
=
e^{i\theta}(e^{i\theta}-e^{-i\theta})
=
2i e^{i\theta}\sin\theta,
\]
hence
\[
(\zeta-1)^{2j-1}
=
(2\sin\theta)^{2j-1} i^{2j-1} e^{i(2j-1)\theta}.
\]
Since
\[
i^{2j-1}=(-1)^{j-1} i,
\]
we obtain
\[
\Re[(\zeta-1)^{2j-1}]
=
(2\sin\theta)^{2j-1} (-1)^{j-1} \Re\bigl(i e^{i(2j-1)\theta}\bigr)
=
(-1)^j (2\sin\theta)^{2j-1} \sin((2j-1)\theta).
\]
Therefore
\[
Q_{n,m}^{(2j-1)}(0)
=
\frac{n!}{(n-2j+1)!}
(-1)^j
(2\sin\theta)^{2j-1}\sin((2j-1)\theta).
\]
Substituting into the previous expression yields
\[
\widehat Q_{n,m}(k)
=
4n! \sum_{j=1}^{ \lceil n/2\rceil}
\frac{ (2\sin\theta)^{2j-1}\sin((2j-1)\theta) }
{ (n-2j+1)!(k\pi)^{2j} },
\]
which is exactly \eqref{eq:explicit}.
\end{proof}

The previous proposition treats the nonzero Fourier modes. We now compute the remaining mode $k=0$ directly from the closed form \eqref{eq:Qclosed}.

\begin{proposition}[The zero Fourier mode]
\label{prop:k0}
For every $n\ge1$ and integer $m$, set
\[
\zeta=e^{im\pi/n}.
\]
Then
\[
\widehat Q_{n,m}(0)
=
\begin{cases}
2, & \zeta=1,\\[1ex]
2\Re\!\left(\dfrac{\zeta^{n+1}-1}{(n+1)(\zeta-1)}\right),
& \zeta\ne1.
\end{cases}
\]
In particular, for $0\le m\le n$ this becomes
\[
\widehat Q_{n,m}(0)=
\begin{cases}
2, & m=0,\\[1ex]
\dfrac{2}{n+1}, & m\ge1\ \text{even},\\[1ex]
0, & m\ \text{odd}.
\end{cases}
\]
Hence $\widehat Q_{n,m}(0)\ge0$ for $0\le m\le n$.
\end{proposition}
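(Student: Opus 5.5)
The plan is to compute $\widehat Q_{n,m}(0)=2\int_0^1 Q_{n,m}(t)\,dt$ directly from the Bernstein form rather than from \eqref{eq:Qclosed}. We use the standard Beta integral
\[
\int_0^1\binom{n}{j}t^j(1-t)^{n-j}\,dt=\frac{1}{n+1}, \qquad 0\le j\le n .
\]
With it we obtain
\[
\widehat Q_{n,m}(0)=\frac{2}{n+1}\sum_{j=0}^{n}\cos\Bigl(\frac{m\pi j}{n}\Bigr)=\frac{2}{n+1}\Re\sum_{j=0}^n\zeta^j .
\]
(Equivalently, $\int_0^1(1-t+t\zeta)^n\,dt$ can be computed by substitution $u=1-t+t\zeta$ when $\zeta\neq1$, giving $\frac{\zeta^{n+1}-1}{(n+1)(\zeta-1)}$. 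Taking real parts commutes with integration, so either route works; the Beta-integral route avoids complex substitution subtleties.)

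If $\zeta=1$, every term in the sum is $1$, so the sum equals $n+1$ and $\widehat Q_{n,m}(0)=2$. If $\zeta\ne1$, the geometric series sums to $(\zeta^{n+1}-1)/(\zeta-1)$, which gives the first displayed formula.

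For the specialization to $0\le m\le n$, note that $\zeta=1$ exactly when $m$ is a multiple of $2n$, which in this range means $m=0$. For $1\le m\le n$ we have $\zeta^n=e^{im\pi}=(-1)^m$, so $\zeta^{n+1}=(-1)^m\zeta$.

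If $m$ is even, then $\zeta^{n+1}-1=\zeta-1$, the quotient is $1$, and $\widehat Q_{n,m}(0)=2/(n+1)$.

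If $m$ is odd, the quotient is $-(\zeta+1)/(\zeta-1)$. Writing $\zeta=e^{2i\theta}$ with $\theta=m\pi/(2n)$, we have
\[
\frac{\zeta+1}{\zeta-1}=\frac{\cos\theta}{i\sin\theta}=-i\cot\theta ,
\]
which is purely imaginary. Here $\sin\theta\neq0$ because $0<\theta\le\pi/2$. Hence its real part is $0$, and $\widehat Q_{n,m}(0)=0$.

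Nonnegativity then follows immediately in all three cases. There is no real obstacle. The only points needing care are the identification of when $\zeta=1$, and checking that $\sin\theta\ne0$ in the odd case (the case $m=n$ odd gives $\theta=\pi/2$, $\cot\theta=0$, which is still fine).
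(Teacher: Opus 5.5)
Your proof is correct and follows the paper's argument. The only difference is that you evaluate $\int_0^1 Q_{n,m}(t)\,dt$ through the Bernstein form and the Beta integral, which gives $\frac{1}{n+1}\sum_{j=0}^n \Re\zeta^j$ and then a geometric series, whereas the paper integrates the closed form $(1-t+t\zeta)^n$ directly; both reach the same quotient $\frac{\zeta^{n+1}-1}{(n+1)(\zeta-1)}$, and your explicit identity $\frac{\zeta+1}{\zeta-1}=-i\cot\theta$ just makes the paper's ``purely imaginary'' step concrete.
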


\begin{proof}
From the closed form
\[
Q_{n,m}(t)= \Re[( 1-t+t\zeta )^n],
\]
we have
\[
\widehat Q_{n,m}(0)
=
2\int_0^1 Q_{n,m}(t) dt
=
2\Re\int_0^1 (1-t+t\zeta)^n dt.
\]
If $\zeta = 1$, then $Q_{n,m}(t) = 1$, and therefore
\[
\widehat Q_{n,m}(0)=2.
\]
If $\zeta \ne 1$, then
\[
\int_0^1 (1-t+t\zeta)^n dt
=
\frac{\zeta^{n+1}-1}{(n+1)(\zeta - 1)}.
\]
Thus
\[
\widehat Q_{n,m}(0)
=
2\Re\!\left(
\frac{\zeta^{n+1}-1}{(n+1)(\zeta -1)}
\right).
\]

It remains to simplify this expression in the reduced range $0\le m\le n$.
For $m=0$, we already have $\zeta=  1$, so
\[
\widehat Q_{n,0}(0) = 2.
\]
Now assume $1\le m\le n$. Since $\zeta^n = e^{ im\pi } = (-1)^m$, we have
\[
\zeta^{n+1} = (-1)^m\zeta.
\]
If $m$ is even, then $\zeta^{n+1} = \zeta$, and hence
\[
\frac{\zeta^{n+1}-1}{(n+1)(\zeta-1)}
=
\frac{\zeta-1}{(n+1)(\zeta-1)}
=
\frac{1}{n+1}.
\]
Therefore
\[
\widehat Q_{n,m}(0) = \frac{2}{n+1}.
\]
If $m$ is odd, then $\zeta^{n+1} = -\zeta$, so
\[
\frac{\zeta^{n+1}-1}{(n+1)(\zeta-1)}
=
-\frac{\zeta+1}{(n+1)(\zeta-1)}.
\]
Since $|\zeta|=1$ and $\zeta\ne1$, the quantity
\[
\frac{\zeta+1}{\zeta-1}
\]
is purely imaginary. Hence its real part is zero, and therefore
\[
\widehat Q_{n,m}(0) =0.
\]
This proves the stated formula and the nonnegativity of the zero Fourier mode.
\end{proof}

Note that for small Bernstein degrees, direct expansion of the basis images 
\[
Q_{n,m}=B_n[\cos(mx)] 
\] 
already reveals the parity cancellation 
\[ 
\widehat Q_{n,m}(k)=0 \qquad\text{when }k+m\text{ is odd}, 
\] 
as well as the dominance of the first positive term for large output frequency $k$. The edge-near calculations, where $m$ is close to $n$, also suggest the Taylor-remainder and alternating-block mechanisms used in the proof. The proof below gives a uniform treatment of all modes, using Abel summation for $k\ge m$ and an integral representation with a decreasing weight for $m>k$.

\section{Full Proof on the Circle}
\label{sec:full-proof-circle}

We now prove the main positivity statement for the cosine coefficients $\widehat Q_{n,m}(k)$. The explicit finite sum \eqref{eq:explicit} is our starting point. The argument splits into two complementary regimes:
\[
k \ge m
\quad \text{and} \quad
m > k.
\]
In the first regime, the finite sum can be handled directly by an Abel-summation argument. In the second regime, the same finite sum is less convenient, and we instead pass to an integral representation whose kernel has a useful monotonicity property.

We first derive an integral formula for $\widehat Q_{n,m}(k)$ in the parity-compatible case. This representation will be used only in the regime $m > k$.
\begin{proposition}[Integral representation for parity-compatible modes]\label{prop:integral-representation}
Fix $n\ge1$ and $1\le m\le n$, and set
\[
\theta:=\frac{m\pi}{2n}\in\Bigl(0,\frac{\pi}{2}\Bigr].
\]
For $k\ge1$, one has $\widehat Q_{n,m}(k)=0$ whenever $k+m$ is odd.
If $k+m$ is even, define
\[
b:=\frac{k\pi}{2},\qquad a:=b\cot\theta,
\]
then
\begin{equation}\label{eq:integral-rep}
\widehat Q_{n,m}(k)=
\frac{4n}{k\pi}\int_0^1 (1-u)^{n-1}\cosh(au)\sin(bu)\,du.
\end{equation}
\end{proposition}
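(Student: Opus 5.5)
The plan is to prove \eqref{eq:integral-rep} by showing that its right-hand side reduces, after repeated integration by parts, to the finite sum \eqref{eq:explicit} of Proposition~\ref{prop:explicit}. The vanishing for $k+m$ odd is already contained in Proposition~\ref{prop:explicit}, so only the parity-compatible case $k+m$ even needs an argument.

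The first step is to complexify the kernel. Put $z:=b+ia$. Then $\sin(zu)=\sin(bu)\cosh(au)+i\cos(bu)\sinh(au)$, so
\[
\cosh(au)\sin(bu)=\Re\sin(zu).
\]
Since $a=b\cot\theta$, we have $z=b\,e^{i\theta}/\sin\theta$, and hence
\[
\frac1z=\frac{\sin\theta}{b}\,e^{-i\theta}=\frac{2\sin\theta}{k\pi}\,e^{-i\theta}.
\]
This is exactly how the factors $(2\sin\theta)^{2j-1}$ and $(k\pi)^{-2j}$ of \eqref{eq:explicit} will arise. Powers $z^{-(2j-1)}$ produce phases $e^{-i(2j-1)\theta}$, and their real or imaginary parts produce $\sin((2j-1)\theta)$.

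A naive Taylor expansion of $\sin(zu)$ against the Beta weight $(1-u)^{n-1}$ produces positive powers of $z$ and factorials $(n+2j-1)!$, which do not match \eqref{eq:explicit}. So the second step is instead to integrate $\int_0^1(1-u)^{n-1}\sin(zu)\,du$ by parts repeatedly. At each step we differentiate the polynomial factor and integrate the trigonometric factor, using the iterated antiderivatives $G_r(u)$ of $\sin(zu)$, which equal $\pm\sin(zu)/z^r$ or $\pm\cos(zu)/z^r$. The boundary terms at $u=1$ vanish as long as a positive power of $(1-u)$ survives. The boundary terms at $u=0$ are nonzero only when $G_r$ is a cosine, which happens for odd $r=2j-1$. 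Those terms have the form
\[
\frac{(n-1)!}{(n-2j+1)!}\cdot\frac{(-1)^{j-1}}{z^{2j-1}},
\]
up to a sign to be tracked. After multiplying by $4n/(k\pi)$ and taking real parts, each term should become
\[
4n!\,\frac{(2\sin\theta)^{2j-1}\sin((2j-1)\theta)}{(n-2j+1)!\,(k\pi)^{2j}}.
\]
The truncation at $j\le\lceil n/2\rceil$ appears automatically, because $(1-u)^{n-1}$ has only $n-1$ nonzero derivatives.

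The main obstacle is the final boundary term at $u=1$, which appears once all $n-1$ derivatives of $(1-u)^{n-1}$ are used up. It has the form $(n-1)!\,\Re[G_n(1)]$, a multiple of $\Re\bigl[\sin(z)z^{-n}\bigr]$ or $\Re\bigl[\cos(z)z^{-n}\bigr]$ depending on the parity of $n$. This term must be shown to vanish, and this is where $k+m$ even enters. The key identities are $z^{-n}=(\sin\theta/b)^n(-i)^m$, since $n\theta=m\pi/2$, and $\sin(b+ia)$, $\cos(b+ia)$ with $b=k\pi/2$. These reduce to $\pm\cosh a$, $\pm\sinh a$, or $\pm i$ times them, according to the parity of $k$. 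I will first check the phase bookkeeping in the four parity cases of $(k,n)$ with $m\equiv k\pmod 2$, and confirm that the product is purely imaginary.

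If some case leaves a real residue, I will fall back on a different route. In that route I would differentiate under the integral in $k$, or verify the identity as polynomials in $\cot\theta$. Concretely, I would compare the right-hand side of \eqref{eq:integral-rep} with the endpoint formula of Lemma~\ref{lem:endpoint}, using the reflection symmetry of Lemma~\ref{lem:reflect} so that only derivatives at $t=0$ appear. I would then match these to the $u=0$ boundary terms directly.
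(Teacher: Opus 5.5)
Your route is the paper's argument run in reverse. The paper starts from the finite sum \eqref{eq:explicit}, recognizes it as $z^n$ times the $(n-1)$st Taylor polynomial of $\sinh$ or $\cosh$, and applies Taylor's theorem with integral remainder, which is proved by exactly your iterated integration by parts. Your $u=0$ boundary terms correspond to the paper's $R_n(z)$, and your final $u=1$ term $(n-1)!\,G_n(1)$ corresponds to the paper's $z^nG_n(w)$, which is killed by the same parity check. The structure is sound, but the polar form you give for $z$ is wrong, and as written it breaks both halves of the bookkeeping. From $a=b\cot\theta$,
\[
z=b+ia=\frac{b}{\sin\theta}\,(\sin\theta+i\cos\theta)=\frac{ib}{\sin\theta}\,e^{-i\theta},
\]
whereas $\frac{b}{\sin\theta}e^{i\theta}=a+ib$. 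With your formula, $\Re z^{-(2j-1)}$ produces $\cos((2j-1)\theta)$ instead of $\sin((2j-1)\theta)$. Also, for odd $n$ with $k,m$ even, your $z^{-n}=(\sin\theta/b)^n(-i)^m$ is real and $\sin(z-n\pi/2)=\pm\cos z=\pm\cosh a$ is real, so the $u=1$ term would be a nonzero real number. That is exactly the ``real residue'' you anticipated, but it comes from the slip, not from the method.

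The correct values are
\[
\frac1z=-i\,\frac{2\sin\theta}{k\pi}\,e^{i\theta},\qquad
z^{-n}=\Bigl(\frac{2\sin\theta}{k\pi}\Bigr)^{n}(-i)^n\,i^m .
\]
Then $\Re z^{-(2j-1)}=(-1)^{j-1}\bigl(\frac{2\sin\theta}{k\pi}\bigr)^{2j-1}\sin((2j-1)\theta)$. This cancels the sign $(-1)^{j-1}$ in your $u=0$ boundary terms, and multiplying by $4n/(k\pi)$ gives exactly the summands of \eqref{eq:explicit}. For the $u=1$ term, write $G_n(1)=\sin(z-n\pi/2)\,z^{-n}$. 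The factor $\sin(z-n\pi/2)=\sin\bigl(ia+(k-n)\pi/2\bigr)$ equals $\pm i\sinh a$ if $k-n$ is even and $\pm\cosh a$ if $k-n$ is odd. The factor $(-i)^n i^m=(-1)^n i^{n+m}$ is real if and only if $n+m$ is even. Since $k\equiv m\pmod 2$, exactly one of the two factors is purely imaginary. Hence $\Re G_n(1)=0$ in all four parity cases, and no fallback route is needed.
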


\begin{proof}
The parity-vanishing statement is already part of
Proposition~\ref{prop:explicit}. Assume $k+m$ is even.

From \eqref{eq:explicit}, write the finite sum as an imaginary part; the auxiliary polynomial $R_n$ below packages the odd powers appearing in the explicit coefficient formula:
\begin{equation}
\label{eq:compact-pn}
\widehat Q_{n,m}(k)=\frac{4n!}{k\pi}\,\Im R_n(z),
\qquad
z:=\frac{\zeta-1}{ik\pi}
=\frac{2\sin\theta}{k\pi}e^{i\theta},
\end{equation}
with $\zeta=e^{im\pi/n}$ and
\[
R_n(z):=\sum_{j=1}^{\lceil n/2\rceil}\frac{z^{2j-1}}{(n-2j+1)!}.
\]
Set $w:=1/z=\frac{k\pi}{2\sin\theta}e^{-i\theta}=a-ib$.

If $n=2p$ is even, reindexing gives
\[
R_n(z)=z^n\sum_{s=0}^{p-1}\frac{w^{2s+1}}{(2s+1)!}.
\]
If $n=2p+1$ is odd, reindexing gives
\[
R_n(z)=z^n\sum_{s=0}^{p}\frac{w^{2s}}{(2s)!}.
\]
Hence, with
\[
G_n(w):=
\begin{cases}
\sinh w,& n\ \text{even},\\
\cosh w,& n\ \text{odd},
\end{cases}
\]
in both parity cases we have
\[
R_n(z)=z^nH_n(w),
\]
where $H_n$ is the $(n-1)$st Taylor polynomial of $G_n$ at $0$.
Taylor's theorem with integral remainder gives
\[
G_n(w) = H_n(w)+\frac{w^n}{(n-1)!}\int_0^1(1-u)^{n-1}\sinh(uw)\,du,
\]
because $G_n^{(n)}=\sinh$ in both parity cases.
Multiplying by $z^n$ and using $z^n w^n=1$,
\[
R_n(z) = z^nG_n(w)-\frac{1}{(n-1)!}\int_0^1(1-u)^{n-1}\sinh(uw)\,du.
\]
Therefore
\begin{equation}\label{eq:im-split}
\Im R_n(z)=\Im\!\bigl(z^nG_n(w)\bigr)
-\frac{1}{(n-1)!}\Im\!\int_0^1(1-u)^{n-1}\sinh(uw)\,du.
\end{equation}

We show $\Im(z^nG_n(w))=0$. Indeed, from
\[ 
z=\frac{2\sin\theta}{k\pi}e^{i\theta}, \qquad \theta=\frac{m\pi}{2n}, 
\]
we can write 
\[ 
z^n=\rho e^{in\theta}=\rho e^{im\pi/2}=\rho i^m, \qquad \rho>0. 
\]
Also $w=a-ib$ with $b=k\pi/2$. If $k$ is even, then $b\in\pi\bbZ$, and
\[ \sinh(a-ib)= \sinh a\cos b-i\cosh a\sin b\in\bbR, \]
\[ \cosh(a-ib)= \cosh a\cos b-i\sinh a\sin b\in\bbR. \]
Since $k+m$ is even, $m$ is also even, so $i^m\in\{\pm1\}$. Thus $z^nG_n(w)\in\bbR$.

If $k$ is odd, then $b\in\pi/2+\pi\bbZ$, and 
\[ \sinh(a-ib)=\sinh a\cos b-i\cosh a\sin b\in i\bbR, \] 
\[ \cosh(a-ib)=\cosh a\cos b-i\sinh a\sin b\in i\bbR. \]
Since $k+m$ is even, $m$ is also odd, so $i^m\in\{\pm i\}$. Thus again $z^nG_n(w)\in\bbR$. Therefore \[ \Im(z^nG_n(w))=0. \]

Also
\[
\sinh(uw)=\sinh(ua)\cos(ub)-i\cosh(ua)\sin(ub),
\]
hence
\[
\Im(\sinh(uw))=-\cosh(ua)\sin(ub).
\]
Using this in \eqref{eq:im-split},
\[
\Im R_n(z)=\frac{1}{(n-1)!}\int_0^1(1-u)^{n-1}\cosh(au)\sin(bu)\,du.
\]
Insert into \eqref{eq:compact-pn} and use $n!/(n-1)!=n$.
\end{proof}

We now treat the regime $k \ge m$. Here the explicit finite sum \eqref{eq:explicit} is already well adapted to the problem, and positivity follows from a monotonicity check on the coefficients together with Abel summation.

\begin{lemma}[Regime $k\ge m$]\label{lem:k-ge-m}
Fix $n\ge1$ and integers $m,k$ with $1\le m\le n$, $k\ge m$, and $k+m$ even.
Then $\widehat Q_{n,m}(k)\ge0$.
\end{lemma}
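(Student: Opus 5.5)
The plan is to work directly with the explicit finite sum \eqref{eq:explicit} of Proposition~\ref{prop:explicit} and to apply Abel summation. We split each summand into a positive amplitude times an oscillating factor. With $\theta=m\pi/(2n)\in(0,\pi/2]$ and $J:=\lceil n/2\rceil$, set
\[
A_j:=\frac{(2\sin\theta)^{2j-1}}{(n-2j+1)!\,(k\pi)^{2j}}>0,\qquad s_j:=\sin((2j-1)\theta),\qquad 1\le j\le J,
\]
so that $\widehat Q_{n,m}(k)=4n!\sum_{j=1}^{J}A_js_j$. Note that $n-2j+1\ge0$ for $j\le J$, so the factorials make sense.

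The first key step is that the partial sums of the oscillating factors are nonnegative. By the standard telescoping identity $2\sin\theta\,\sin((2j-1)\theta)=\cos((2j-2)\theta)-\cos(2j\theta)$, we get
\[
S_r:=\sum_{j=1}^{r}\sin((2j-1)\theta)=\frac{1-\cos(2r\theta)}{2\sin\theta}=\frac{\sin^2(r\theta)}{\sin\theta}\ge0 .
\]
This uses $\sin\theta>0$, which holds because $m\ge1$.

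The second key step, and the only place the hypothesis $k\ge m$ enters, is to show that the amplitudes are nonincreasing: $A_{j+1}\le A_j$ for $1\le j<J$. The ratio is
\[
\frac{A_{j+1}}{A_j}=\frac{4\sin^2\theta\,(n-2j+1)(n-2j)}{(k\pi)^2}\le\frac{4\sin^2\theta\, n^2}{k^2\pi^2}.
\]
Using $\sin\theta\le\theta=m\pi/(2n)$, this is at most $m^2/k^2\le1$. I expect this monotonicity check to be the main point of the argument. It is also where the regime split becomes visible: when $m>k$ the ratio can exceed $1$, which is why that case needs the integral representation \eqref{eq:integral-rep} instead. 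The factorial factor $(n-2j+1)(n-2j)\le n^2$ is crude but suffices here.

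Finally, Abel summation gives
\[
\sum_{j=1}^{J}A_js_j=\sum_{r=1}^{J-1}(A_r-A_{r+1})S_r+A_JS_J .
\]
Every term on the right is a product of nonnegative factors, so $\widehat Q_{n,m}(k)\ge0$. The degenerate case $J=1$ (that is, $n\le2$) is covered trivially, since then the sum is $A_1\sin\theta\ge0$.
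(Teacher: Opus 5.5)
Your proposal is correct and follows the paper's proof essentially step by step: you use the same split of \eqref{eq:explicit} into positive amplitudes times $\sin((2j-1)\theta)$, the same nonnegative partial sums $S_r=\sin^2(r\theta)/\sin\theta$, and the same Abel summation. The only difference is cosmetic. You bound $(n-2j+1)(n-2j)\le n^2$, which gives a ratio $\le m^2/k^2\le 1$, whereas the paper uses $(n-1)(n-2)$ and gets a strict ratio $<1$ for $n\ge 3$; non-strict monotonicity is all that Abel summation needs.
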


\begin{proof}
From Proposition~\ref{prop:explicit},
\[
\widehat Q_{n,m}(k)=\sum_{j=1}^{J} c_j\sin((2j-1)\theta),
\qquad
J:=\Bigl\lceil\frac n2\Bigr\rceil,
\]
with
\[
c_j:=\frac{4n!}{(n-2j+1)!}\frac{(2\sin\theta)^{2j-1}}{(k\pi)^{2j}}>0,
\qquad
\theta=\frac{m\pi}{2n}.
\]
If $n=1$ or $n=2$, then $J=1$, so no adjacent ratio occurs; the single summand is nonnegative because $\theta\in(0,\pi/2]$. Thus assume $n\ge3$. Its adjacent ratio is
\[
\frac{c_{j+1}}{c_j}
=\frac{(2\sin\theta)^2}{(k\pi)^2}(n-2j+1)(n-2j).
\]
Since $\sin\theta\le\theta$ and $k\ge m$,
\[
2\sin\theta\le 2\theta=\frac{m\pi}{n}\le\frac{k\pi}{n},
\]
thus 
\[ 
\Bigl(\frac{2\sin\theta}{k\pi}\Bigr)^2\le\frac1{n^2}. 
\]
Whenever the ratio $c_{j+1}/c_j$ occurs, we have  $1\le j\le J-1$. Hence $n-2j\ge0$, and both factors are bounded above by their values at $j=1$:
\[ 
(n-2j+1)(n-2j)\le (n-1)(n-2). 
\] 
Hence 
\[ 
\frac{c_{j+1}}{c_j} \le \frac{(n-1)(n-2)}{n^2}<1 
\] 
for $n \ge 3$. So $c_1\ge c_2\ge\cdots\ge c_J\ge0$.

Let
\[
S_j:=\sum_{r=1}^j \sin((2r-1)\theta)
=\frac{\sin^2(j\theta)}{\sin\theta}\ge0.
\]
Although the individual sine factors may change sign, their odd partial sums are nonnegative. This is the point that makes Abel summation effective. Abel summation gives
\[
\sum_{j=1}^{J} c_j\sin((2j-1)\theta)
=c_JS_J+\sum_{j=1}^{J-1}(c_j-c_{j+1})S_j\ge0.
\]
Hence $\widehat Q_{n,m}(k)\ge0$.
\end{proof}

We next turn to the regime $m > k$. In this range, the coefficient sequence in \eqref{eq:explicit} is no longer the right object to control directly, so we use the integral representation from Proposition~\ref{prop:integral-representation}. The point is that the weight in that integral is decreasing, which lets us exploit the oscillation of $\sin(bu)$.

\begin{lemma}[Regime $m>k$]\label{lem:m-gt-k}
Fix $n\ge1$ and integers $m,k$ with $1\le m\le n$, $1\le k<m$, and $k+m$ even.
Then $\widehat Q_{n,m}(k)\ge0$.
\end{lemma}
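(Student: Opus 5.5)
We work from the integral representation of Proposition~\ref{prop:integral-representation}. With $b=k\pi/2$ and $a=b\cot\theta$, $\theta=m\pi/(2n)$, it suffices to show
\[
I:=\int_0^1 (1-u)^{n-1}\cosh(au)\sin(bu)\,du\ \ge 0 .
\]
The factor $\sin(bu)$ has half-period $\pi/b=2/k$. On $[0,1]$ it therefore completes $\lfloor k/2\rfloor$ full periods, plus a positive half-hump on $[\,2\lfloor k/2\rfloor/k,1]$ when $k$ is odd. The proof is the standard argument that oscillatory integrals against a decreasing weight are nonnegative: split $[0,1]$ into consecutive positive/negative hump pairs $[2\ell/k,(2\ell+1)/k]\cup[(2\ell+1)/k,(2\ell+2)/k]$, and pair each positive hump with the following negative one. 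On such a pair, substituting $u\mapsto u+1/k$ in the negative hump shows the pair contributes
\[
\int_{2\ell/k}^{(2\ell+1)/k}\bigl(\omega(u)-\omega(u+1/k)\bigr)\sin(bu)\,du ,
\qquad \omega(u):=(1-u)^{n-1}\cosh(au).
\]
This is $\ge0$ as soon as $\omega$ is nonincreasing on the relevant range. Any leftover positive half-hump contributes $\ge0$ trivially. So the whole lemma reduces to showing that $\omega$ is nonincreasing on $[0,1]$.

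\textbf{Monotonicity of $\omega$.} We have
\[
\frac{\omega'(u)}{\omega(u)}=-\frac{n-1}{1-u}+a\tanh(au)\le -(n-1)+a^2u\cdot\frac{1}{1-u}\quad\text{(using }\tanh x\le x\text{)},
\]
or, more crudely, $\omega'\le0$ whenever $a\tanh(au)\le (n-1)/(1-u)$. Since $(n-1)/(1-u)\ge n-1$, it suffices to prove $a\le n-1$, or alternatively $a\tanh(a)\le n-1$. This is where the hypothesis $m>k$ enters. Using $\cot\theta\le 1/\theta$,
\[
a=\frac{k\pi}{2}\cot\frac{m\pi}{2n}\le \frac{k\pi}{2}\cdot\frac{2n}{m\pi}=\frac{kn}{m}.
\]
Parity forces $m\ge k+2$, so $a\le \frac{kn}{k+2}$. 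This gives $a\le n-1$ whenever $kn/(k+2)\le n-1$, i.e.\ whenever $n\ge (k+2)/2$, which always holds because $n\ge m\ge k+2$. Hence $\omega'\le 0$ on $[0,1)$.

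\textbf{Main obstacle.} The delicate point is making the bound $a\tanh(au)\le(n-1)/(1-u)$ uniform in $u$. The plan handles this by bounding $\tanh\le1$ and $(1-u)^{-1}\ge1$, which reduces everything to the scalar inequality $a\le n-1$; this is handled above. Edge cases also need checking. If $n=1$, then $m=1$ and $k<m$ is impossible, so this case is vacuous. Equality cases such as $m=n$, where $\cot\theta=0$ and $\omega=(1-u)^{n-1}$, need only a brief remark.

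With $\omega$ nonincreasing, the pairing argument gives $I\ge0$. Since $4n/(k\pi)>0$, it follows that $\widehat Q_{n,m}(k)\ge0$.
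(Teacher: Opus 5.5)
Your reduction to monotonicity of $\omega$ is correct and essentially the paper's, and so is your proof that $a\le n-1$. You get $m\ge k+2$ from parity, where the paper uses $m\ge k+1$ together with $k\le n-1$; both work.

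The gap is in the oscillation bookkeeping, which is off by a factor of two. Since $bu$ runs over $[0,k\pi/2]$, one hump of $\sin(bu)$ has length $2/k$. So $[0,1]$ contains $\lfloor k/2\rfloor$ humps, not full periods. Your intervals $[2\ell/k,(2\ell+1)/k]$ and $[(2\ell+1)/k,(2\ell+2)/k]$ are the two quarter-periods of the same hump, and both have sign $(-1)^\ell$. The shift $u\mapsto u+1/k$ sends $\sin(bu)$ to $\cos(bu)$, not to $-\sin(bu)$, so your displayed pair identity is false.

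More seriously, for odd $k=2\ell+1$ the leftover interval $[(k-1)/k,1]$ corresponds to $bu\in[\ell\pi,\ell\pi+\pi/2]$ and has sign $(-1)^\ell$. When $k\equiv 3\pmod 4$ it is a \emph{negative} quarter-hump, not a positive one. For example, take $k=3$, which is admissible with $n=m=5$: then $\sin(3\pi u/2)<0$ on $(2/3,1]$, and nothing in your argument controls this negative contribution.

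To repair this, use the humps $H_r=[2r/k,2(r+1)/k]$, of sign $(-1)^r$, and the shift $u\mapsto u+2/k$. This shift does give $\sin(b(u+2/k))=-\sin(bu)$, so each complete pair $H_{2s}\cup H_{2s+1}$ contributes
\[
\int_{4s/k}^{(4s+2)/k}\bigl(\omega(u)-\omega(u+2/k)\bigr)\sin(bu)\,du\ \ge 0 .
\]
After the last complete pair, the remainder is one of four things. It may be empty. It may be a positive hump, when $k=2\ell$ with $\ell$ odd. It may be a positive quarter-hump, when $k=2\ell+1$ with $\ell$ even. Or, when $k=2\ell+1$ with $\ell$ odd, it is the positive hump $H_{\ell-1}$ followed by a negative quarter-hump.

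In that last case you need an extra comparison, which follows from $\omega(u)\le\omega(u-2/k)$:
\[
\int_{(k-1)/k}^{1}\omega(u)\,\lvert\sin(bu)\rvert\,du
\le\int_{(k-3)/k}^{(k-2)/k}\omega(v)\sin(bv)\,dv
\le\int_{(k-3)/k}^{(k-1)/k}\omega(v)\sin(bv)\,dv .
\]
In the variable $t=bu$ this is exactly the paper's inequality $C\le M_{\ell-1}$, and it is the step your proposal is missing.
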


\begin{proof}
By Proposition~\ref{prop:integral-representation},
\[
\widehat Q_{n,m}(k)=\frac{4n}{k\pi}\int_0^1 w(u)\sin(bu)\,du,
\]
where
\[
w(u):=(1-u)^{n-1}\cosh(au),\qquad
b:=\frac{k\pi}{2},\qquad
a:=b\cot\theta, \qquad \theta=\frac{m\pi}{2n}.
\]
Since $m>k\ge1$ and $m\le n$, necessarily $n\ge2$.

First, $w$ is nonnegative and nonincreasing on $[0,1]$.
Indeed
\[
w'(u)=(1-u)^{n-2}\!\left(-(n-1)\cosh(au)+(1-u)a\sinh(au)\right).
\]
So it is enough to show $(1-u)a\tanh(au)\le n-1$. Here $a \ge 0$, since $\cot\theta \ge 0$ on $(0,\pi/2]$. Thus $0\le\tanh(au)\le1$, and from $(1-u) \le 1$ it suffices that $a\le n-1$. If \(\theta=\pi/2\), then \(a = b\cot\theta =0\), so \(a\le n-1\). Otherwise \(0< \theta < \pi/2\), and \(\tan\theta\ge \theta\), hence \(\cot\theta\le 1/\theta\). Hence,
\[
a=\frac{k\pi}{2}\cot\theta\le \frac{k\pi}{2}\cdot\frac1\theta
=\frac{k\pi}{2}\cdot\frac{2n}{m\pi}=\frac{kn}{m}.
\]
Because $m\ge k+1$ and $k\le n-1$,
\[
\frac{kn}{m}\le\frac{kn}{k+1}=n-\frac{n}{k+1}\le n-1.
\]
Hence $w'(u)\le0$.

Now set $t=bu$, so
\[
\int_0^1 w(u)\sin(bu)\,du
=\frac1b\int_0^b W(t)\sin t dt,\qquad W(t):=w(t/b).
\]
Thus $W$ is nonnegative and decreasing on $[0,b]$.

We now use the elementary fact that integrating $\sin t$ against a nonnegative decreasing weight over full or half periods gives a nonnegative alternating-block sum. If $k = 2\ell$ is even, then $b = \ell\pi$ and
\[
\int_0^{\ell\pi}W(t)\sin t dt
= \sum_{r=0}^{\ell-1}(-1)^rM_r,\quad
M_r:= \int_0^\pi W(r\pi+u) \sin u du.
\]
Since $W$ decreases, $M_0 \ge M_1\ge\cdots\ge M_{\ell-1} \ge 0$. Therefore, if $\ell$ is even, then 
\[ 
\sum_{r=0}^{\ell-1}(-1)^rM_r = (M_0-M_1) + \cdots + (M_{\ell-2}-M_{\ell-1}) \ge 0.
\]
If $\ell$ is odd, then the sum is $M_0 \ge 0$ when $\ell=1$, while for $\ell\ge3$,
\[
\sum_{r=0}^{\ell-1}(-1)^rM_r
= (M_0-M_1) + \cdots + (M_{\ell-3}-M_{\ell-2})+M_{\ell-1} \ge 0.
\]
Thus the alternating sum is nonnegative in all even-$k$ cases.

If $k = 2 \ell +1$ is odd, then $b = \ell\pi + \pi/2$. For $\ell=0$ (i.e. $k=1$),
\[ 
\int_0^{\pi/2}W(t) \sin t dt \ge 0. 
\]
Now assume $\ell \ge 1$. Then 
\[ 
\int_0^{\ell\pi+\pi/2}W(t)\sin t dt = \sum_{r=0}^{\ell-1}(-1)^rM_r + (-1)^\ell C, 
\] 
where
\[ 
M_r =\int_0^\pi W(r\pi+u)\sin u\,du, \qquad C=\int_0^{\pi/2}W(\ell\pi+u) \sin u du. 
\]
Since $W$ is decreasing and nonnegative, 
\[ 
M_0\ge M_1\ge\cdots\ge M_{\ell-1}\ge 0, \qquad 0\le C\le M_{\ell-1}. 
\]

If $\ell$ is even, then 
\[ 
\sum_{r=0}^{\ell-1}(-1)^rM_r = (M_0-M_1)+\cdots+(M_{\ell-2}-M_{\ell-1})\ge0, \] 
and the final term is $+C$. Hence the integral is nonnegative. 

If $\ell$ is odd, then the final term is $-C$. For $\ell=1$, the paired sum is empty and the integral is $M_0-C\ge0$. For $\ell\ge3$,
\[ 
\sum_{r=0}^{\ell-1}(-1)^rM_r = (M_0-M_1) + \cdots + (M_{\ell-3}-M_{\ell-2})+M_{\ell-1} \ge M_{\ell-1}. 
\] 
Since $C \le M_{\ell-1}$, the whole expression is still nonnegative. Thus the integral is nonnegative in all odd-$k$ cases. Therefore 
\[ 
\int_0^1 w(u)\sin(bu) du \ge 0, 
\]
and multiplying by $4n/(k\pi)>0$ yields $\widehat Q_{n,m}(k) \ge 0$.

\end{proof}

\begin{proof}[Proof of Theorem~\ref{thm:main}]
Fix $n\ge1$. By basis reduction, it is enough to prove
\[
\widehat Q_{n,m}(k)\ge0
\quad\text{for all }m\ge0,\ k\ge0.
\]
For each $m\ge0$, let $r$ be the representative of $m$ modulo
$2n$ in $\{0,\ldots,2n-1\}$, and set
\[
m^\sharp :=
\begin{cases}
r, & 0\le r\le n,\\
2n-r, & n<r<2n.
\end{cases}
\]
This is the aliasing induced by the Bernstein nodes: the sampled values of $\cos(mx)$ at $x=j\pi/n$ depend on $m$ only modulo $2n$, up to reflection. Then for every $j=0,\dots,n$,
\[
\cos\!\Bigl(\frac{m\pi j}{n}\Bigr)
=\cos\!\Bigl(\frac{m^\sharp\pi j}{n}\Bigr),
\]
so $Q_{n,m}=Q_{n,m^\sharp}$ and hence
$\widehat Q_{n,m}(k)=\widehat Q_{n,m^\sharp}(k)$ for all $k$.
Therefore it is enough to treat $0\le m\le n$.

For $k=0$, Proposition~\ref{prop:k0} gives nonnegativity.
For $k\ge1$:
if $m=0$, then $Q_{n,0}\equiv1$, so $\widehat Q_{n,0}(k)=0$.
Assume $1\le m\le n$.
If $k+m$ is odd, Proposition~\ref{prop:explicit} gives
$\widehat Q_{n,m}(k)=0$.
If $k+m$ is even and $k\ge m$, use Lemma~\ref{lem:k-ge-m}.
If $k+m$ is even and $m>k$, use Lemma~\ref{lem:m-gt-k}.
Hence $\widehat Q_{n,m}(k)\ge0$ for all $m,k$.

Now let
\[
f(x)=\frac{b_0}{2}+\sum_{m\ge1}b_m\cos(mx),\qquad b_m\ge0,\qquad \sum_{m=0}^\infty  b_m<\infty.
\]
By linearity and uniform convergence,
\[
B_n[f](x) = \frac{b_0}{2}+\sum_{m\ge1}b_m  B_n[\cos(mx)](x).
\]
Uniform convergence also justifies integrating term-by-term against $\cos(kx)$. Therefore the $k$-th cosine coefficient of $B_n[f]$ is
\[ 
\widehat{B_n[f]}_k = \sum_{m\ge1} b_m \widehat Q_{n,m}(k) 
\] 
for $k\ge1$, while the zeroth coefficient is 
\[ 
\widehat{B_n[f]}_0 = b_0+\sum_{m\ge1}b_m \widehat Q_{n,m}(0). 
\]
Since $b_m\ge0$ and $\widehat Q_{n,m}(k)\ge0$ for all $m$ and $k$, every term on the right-hand sides is nonnegative. Hence
\[ \widehat{B_n[f]}_k\ge0 \qquad\text{for all }k\ge0. \]

Moreover, $B_n[f]$ is a polynomial on $[0,\pi]$, and Lemma~\ref{lem:endpoint}, applied after rescaling to $[0,1]$, shows that its nonzero cosine coefficients are $O(k^{-2})$. Hence these coefficients are summable, and the cosine series of $B_n[f]$ converges uniformly to $B_n[f]$. By Bochner's theorem on the circle (equivalently, the characterization of positive-definite functions on $S^1$ by nonnegative cosine coefficients), it follows that $B_n[f]$ is positive-definite on $S^1$.
\end{proof}

\section{Spheres and Power-Bernstein Operators}

The finite-dimensional sphere analogue remains open. Let $d \ge 2$ and $\lambda = (d-1)/2 $. A continuous function $\varphi:[-1,1] \to \mathbb R$ defines a zonal positive-definite kernel on $S^d$ if
\[
K(x,y)=\varphi(x\cdot y)
\]
is positive semidefinite for every finite set of points on $S^d$. By Schoenberg's theorem \cite{Schoenberg1942PositiveDF,Gneiting2013Spheres}, this is equivalent to a Gegenbauer expansion
\[
\varphi(t)=\sum_{\ell=0}^{\infty} a_\ell
\frac{C_\ell^{(\lambda)}(t)}{C_\ell^{(\lambda)}(1)},
\qquad a_\ell\ge0,
\qquad \sum_{\ell=0}^{\infty}a_\ell<\infty.
\]
Here $C_\ell^{(\lambda)}$ denotes the Gegenbauer polynomial of degree $\ell$; for $d=2$, $\lambda=1/2$ and $C_\ell^{(1/2)}=P_\ell$, the Legendre polynomial.

\begin{problem}[Finite-dimensional sphere analogue]
\label{conj:spheres}
Let $d\ge2$. Is there a natural Bernstein-type approximation operator $T_n^{(d)}$ on zonal functions $\varphi:[-1,1]\to\mathbb R$ such that, whenever $\varphi(x\cdot y)$ is positive-definite on $S^d$, the kernel
\[
(x,y)\mapsto (T_n^{(d)}\varphi)(x\cdot y)
\]
is also positive-definite on $S^d$? Equivalently, can one construct a Bernstein-type operator that preserves nonnegativity of the Gegenbauer coefficients?
\end{problem}

Note that the term ``Bernstein-type'' is an important distinction here. The most direct affine Bernstein analogue in the dot-product variable already fails on $S^2$. To avoid confusing this interval operator with the circle operator $B_n$ from \eqref{eq:Bndef}, write $B_4^{[-1,1]}$ for the ordinary affine Bernstein operator on $[-1,1]$,
\[
B_4^{[-1,1]}[f](t) =\sum_{j=0}^{4} f\!\left(-1+\frac{j}{2}\right)
\binom{4}{j}\left(\frac{1+t}{2}\right)^j
\left(\frac{1-t}{2}\right)^{4-j}.
\]
A direct calculation gives
\[
B_4^{[-1,1]}[P_6](t) = \frac{347}{2048}+\frac{63}{64}t^2-\frac{315}{2048}t^4,
\]
and the Legendre expansion is
\[
B_4^{[-1,1]}[P_6]
=\frac{239}{512}P_0+\frac{291}{512}P_2-\frac{9}{256}P_4.
\]
The input $P_6$ is positive-definite on $S^2$, since its Legendre expansion has the single nonzero Schoenberg coefficient $a_6=1$. By uniqueness of the Legendre expansion, the negative $P_4$ coefficient shows that this naive affine Bernstein operator does not preserve the positive-definite cone on $S^2$. Thus any successful higher-dimensional analogue must be adapted to the Gegenbauer coefficient structure rather than obtained by a direct affine transplant of the interval operator.

The finite-dimensional problem is difficult because the positive-definite cone is described by Gegenbauer coefficients, and a Bernstein-type operator would have to preserve that coefficient cone. In the limiting case $S^\infty$, however, the coefficient cone is much simpler. Schoenberg's characterization says that a continuous zonal function $g:[-1,1]\to\mathbb R$ is positive-definite on $S^\infty$ if and only if 
\[ 
g(t)=\sum_{r=0}^{\infty} \beta_r t^r, \qquad \beta_r\ge0, \qquad \sum_{r=0}^{\infty}\beta_r<\infty. 
\] 
Thus, on $S^\infty$, it is enough to preserve nonnegativity of ordinary monomial coefficients. This gives a simple positive result for a related power-Bernstein family. In particular, for a positive integer $\alpha$, define
\[
B_{\alpha,n}[g](t):= \sum_{j=0}^{n}
g \left(\frac{j}{n}\right)
\binom{n}{j}t^{\alpha j}(1-t^\alpha)^{n-j}, \qquad -1\le t\le1.
\]

\begin{proposition}[Power-Bernstein preservation on $S^\infty$]
\label{prop:power-bernstein-sinfty}
Let $\alpha \in \{1,2,3,\ldots\}$. If
\[
g(t) = \sum_{r=0}^{\infty}\beta_r t^r, 
\qquad \beta_r\ge0,
\qquad \sum_{r=0}^{\infty}\beta_r < \infty,
\]
then $B_{\alpha,n}[g]$ has an expansion
\[
B_{\alpha,n}[g](t) = \sum_{\ell=0}^{n} \Gamma_\ell t^{\alpha\ell},
\qquad \Gamma_\ell \ge 0.
\]
Consequently, $B_{\alpha,n}[g]$ is positive-definite on $S^\infty$.
\end{proposition}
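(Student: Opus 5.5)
The plan is to compute the monomial expansion of $B_{\alpha,n}[g]$ directly, first reducing by linearity to the monomials $g(t)=t^r$. Since $|g(j/n)|$ is controlled by $\sum_r\beta_r<\infty$ and the sum over $j$ is finite, we have
\[
B_{\alpha,n}[g]=\sum_{r\ge0}\beta_r\,B_{\alpha,n}[t^r],
\]
with absolute convergence at each of the finitely many nodes. It therefore suffices to show that each $B_{\alpha,n}[t^r]$ is a polynomial in $s:=t^\alpha$ of degree at most $n$ with nonnegative coefficients. Collecting terms then gives $\Gamma_\ell=\sum_r\beta_r\gamma_{\ell}^{(r)}\ge0$, and these series converge because $0\le\gamma^{(r)}_\ell\le\binom{n}{\ell}\max_j\binom{j}{\ell}$-type bounds hold uniformly in $r$ (indeed $\gamma^{(r)}_\ell$ is bounded independently of $r$ since the values $(j/n)^r\le1$).

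The key step is the classical identity for the ordinary Bernstein operator in the variable $s\in[0,1]$ (extended polynomially to $s\in[-1,1]$). Write
\[
B_{\alpha,n}[g](t)=\sum_{j=0}^n g(j/n)\binom nj s^j(1-s)^{n-j}.
\]
Expand $(1-s)^{n-j}$ binomially and collect powers of $s$. The coefficient of $s^\ell$ is
\[
\binom n\ell\,\Delta^\ell[g(\cdot/n)](0),
\]
the $\ell$-th forward difference with step $1/n$ at $0$. For $g(x)=x^r$ this equals
\[
\binom n\ell\, n^{-r}\Delta^\ell[x^r](0)=\binom n\ell\,n^{-r}\,\ell!\,S(r,\ell),
\]
where $S(r,\ell)$ denotes the Stirling numbers of the second kind. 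These are nonnegative; equivalently, one can note that $\Delta^\ell x^r$ at $0$ counts surjections from an $r$-set onto an $\ell$-set. This yields $\gamma^{(r)}_\ell\ge0$, and moreover $\sum_\ell\gamma^{(r)}_\ell=B_{\alpha,n}[t^r](1)=1$. That identity gives the uniform bound needed for the interchange of summations.

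Finally, $B_{\alpha,n}[g](t)=\sum_{\ell=0}^n\Gamma_\ell t^{\alpha\ell}$ is a power series in $t$ with nonnegative coefficients. The coefficient of $t^{\alpha\ell}$ is $\Gamma_\ell$ and all other coefficients are zero. The coefficient sum is $\sum_\ell\Gamma_\ell=B_{\alpha,n}[g](1)=g(1)=\sum_r\beta_r<\infty$. Schoenberg's $S^\infty$ characterization then gives positive-definiteness. I expect the only real obstacle to be the nonnegativity of the forward differences of $x^r$; the Stirling/surjection identity handles it. An alternative, if one prefers, is to note that $\Delta^\ell x^r$ is a nonnegative combination by induction on $r$ via $x^{r}=x\cdot x^{r-1}$ and the Leibniz rule for differences.
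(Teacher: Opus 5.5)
Your proposal is correct and matches the paper's proof essentially step for step. Like the paper, you expand $(1-s)^{n-j}$ with $s=t^\alpha$, identify the coefficient of $s^\ell$ as $\binom{n}{\ell}$ times an $\ell$-th finite difference, evaluate it on monomials as $\binom{n}{\ell}n^{-r}\ell!\,S(r,\ell)\ge0$, and conclude by Schoenberg's $S^\infty$ characterization. Your observation that $\gamma^{(r)}_\ell\ge0$ with $\sum_\ell\gamma^{(r)}_\ell=1$ just makes the interchange of the $r$- and $\ell$-sums slightly more explicit than the paper's appeal to uniform convergence.
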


\begin{proof}
By linearity and uniform convergence of the power series for $g$,
\[
B_{\alpha,n}[g](t)
=
\sum_{r\ge0} \beta_r \sum_{j=0}^{n}\binom{n}{j} t^{\alpha j}(1-t^\alpha)^{n-j}(j/n)^r.
\]
Set $y = t^\alpha$. Then
\[
B_{\alpha,n}[g](t)
=
\sum_{r \ge 0} \beta_r \sum_{j=0}^{n}\binom{n}{j} y^j(1-y)^{n-j}(j/n)^r.
\]
Expanding $(1-y)^{n-j}$ and collecting powers of $y$ gives
\[
B_{\alpha,n}[g](t) 
= \sum_{\ell=0}^{n}\Gamma_\ell y^\ell
= \sum_{\ell=0}^{n}\Gamma_\ell t^{\alpha\ell},
\]
where
\[
\Gamma_\ell
=
\sum_{r\ge 0} \beta_r \sum_{j=0}^{\ell} \binom{n}{j}\binom{n-j}{\ell-j} (-1)^{\ell-j}(j/n)^r.
\]
Using
\[
\binom{n}{j}\binom{n-j}{\ell-j}
=
\binom{n}{\ell}\binom{\ell}{j},
\]
we obtain
\[
\Gamma_\ell
=
\sum_{r\ge0}\beta_r\binom{n}{\ell}\frac{1}{n^r}
\sum_{j=0}^{\ell}(-1)^{\ell-j}\binom{\ell}{j}j^r.
\]
The inner sum is the standard finite-difference representation of the Stirling numbers:
\[
\sum_{j=0}^{\ell}(-1)^{\ell-j}\binom{\ell}{j}j^r = \ell! S(r,\ell),
\]
where $S(r,\ell)$ is a Stirling number of the second kind, with the conventions $S(0,0)=1$ and $S(r,0)=0$ for $r>0$. These conventions account for the $\ell=0$ terms. Hence
\[
\Gamma_\ell
=
\sum_{r\ge0}\beta_r\binom{n}{\ell}\frac{\ell!\,S(r,\ell)}{n^r}
\ge0.
\]
Since $\alpha$ is a positive integer, each exponent $\alpha\ell$ is a
nonnegative integer. Therefore $B_{\alpha,n}[g]$ has a nonnegative integer-power
expansion, so by Schoenberg's characterization it is positive-definite on
$S^\infty$.
\end{proof}

\section{Conclusion and Remarks}
Theorem~\ref{thm:main} shows that the classical Bernstein operator, although defined by point samples rather than by Fourier multipliers, preserves the Bochner-Schoenberg positive-definite cone on $S^1$. This matters because positive-definite functions on the circle arise as isotropic covariance and kernel functions, where approximation should not destroy positive semidefiniteness of finite sampled matrices. Equivalently, Bernstein approximation gives a way to approximate an admissible circle covariance without leaving the covariance cone at any finite degree. The proof is coefficient-theoretic: the Bernstein images of the cosine basis have nonnegative cosine coefficients, and the two-regime argument extracts the monotonicity and alternating-block structure needed for this nonnegativity.

The higher-dimensional discussion shows both the promise and the limitation of this circle result. The naive affine Bernstein analogue fails on $S^2$, while the limiting space $S^\infty$ admits a simple integer power-Bernstein construction. A genuine finite-dimensional analogue should therefore be formulated as a Gegenbauer-coefficient-preservation problem rather than as a direct interval-to-sphere transplant.

\section*{Acknowledgments}
We thank Tianshi Lu and Chunsheng Ma of Wichita State University for helpful early discussions of this work.

The authors used ChatGPT and Codex to assist with exploratory symbolic and computational checks in small cases, which helped identify patterns later generalized in the proof. The mathematical statements, proofs, references, and conclusions were independently reviewed and verified by the authors. The authors are responsible for all content of the manuscript, including any errors.

\bibliographystyle{unsrt}
\bibliography{refs}

\end{document}